\newcommand{\C} {\ensuremath{\mathbb{C}}}
\newcommand{\OO}{\mathcal{O}}
\newcommand{\dq}{\overline{\partial}}
\newcommand{\wt}[1]{\widetilde{#1}}
\DeclareMathOperator{\Sing}{Sing}
\newtheorem {satz} {Satz} [section]
\newtheorem {cor} [satz] {Korollar}
\newtheorem {thm} [satz] {Theorem}
\DeclareMathOperator{\supp}{supp}
\renewcommand{\theta}{\vartheta}
\title[Hartogs' extension theorem] 
{A $\dq$-theoretical proof of Hartogs' extension theorem on $(n-1)$-complete complex spaces}
\author{J. Ruppenthal}
\address{Department of Mathematics, University of Wuppertal, Gau{\ss}str. 20, 42119 Wuppertal, Germany.}
\email{ruppenthal@uni-wuppertal.de}
\date{November 12, 2008}
\subjclass[2000]{32F10, 32C20, 32C35}
\keywords{Hartogs' extension theorem, singular complex spaces}
\begin{document}

~\\[-3mm]

\begin{abstract} 
Let $X$ be a connected normal complex space of dimension $n\geq 2$ which is $(n-1)$-complete,
and let $\pi: M \rightarrow X$ be a resolution of singularities.
By use of Takegoshi's generalization of the Grauert-Riemenschneider vanishing theorem,
we deduce $H^1_{cpt}(M,\OO)=0$, which in turn implies Hartogs' extension theorem on $X$
by the $\dq$-technique of Ehrenpreis.
\end{abstract}

\maketitle

\section{Introduction}

Whereas first versions of Hartogs' extension theorem (e.g. on polydiscs) were usually obtained
by filling Hartogs' figures with analytic discs, no such geometrical proof was known for the general theorem
in complex number space $\C^n$ for a long time. Proofs of the general theorem in $\C^n$ 
usually depend on the Bochner-Martinelli-Koppelman kernel
or on the solution of the $\dq$-equation with compact support
(the famous idea due to Ehrenpreis \cite{Eh}).

Only recently, J.\ Merker and E.\ Porten were able to fill the gap by giving a geometrical proof 
of Hartogs' extension theorem in $\C^n$ (see \cite{MePo1}) by using a finite number of
parameterized families of holomorphic discs and Morse-theoretical tools for the global topological control 
of monodromy, but no $\dq$-theory or intergal kernels (except the Cauchy kernel).

They also extended their result to the general case of $(n-1)$-complete normal complex spaces (see \cite{MePo2}),
where no proof was known until now at all. One reason is the lack of global integral kernels or an appropriate 
$\dq$-theory for singular complex spaces.
The present paper is an answer to the
question wether it could be possible to use some $\dq$-theoretical considerations
for reproducing the result of Merker and Porten on a $(n-1)$-complete complex space $X$. 
More precisely, we 
solve a  $\dq$-equation with compact support on a desingularization of $X$
in order to derive the following statement by the technique of Ehrenpreis:

\begin{thm}\label{thm:hartogs}
Let $X$ be a connected normal complex space of dimension $n\geq 2$ which is $(n-1)$-complete.
Furthermore, let $D$ be a domain in $X$ and $K\subset D$ a compact subset such that $D\setminus K$ is connected.
Then each holomorphic function $f\in \mathcal{O}(D\setminus K)$ has a unique holomorphic extension to the whole set $D$.
\end{thm}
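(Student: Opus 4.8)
The plan is to transfer the whole problem to a resolution of singularities $\pi\colon M\to X$ and to solve a $\dq$-equation with compact support there. Since $X$ is normal and $\pi$ is a proper modification, one has $\pi_*\OO_M=\OO_X$, the fibres of $\pi$ are connected, and $\pi$ is biholomorphic over $\Reg X$. Writing $\wt{D}:=\pi^{-1}(D)$ and $\wt{K}:=\pi^{-1}(K)$, properness makes $\wt{K}$ compact, connectedness of $D\setminus K$ together with connected fibres makes $\wt{D}\setminus\wt{K}=\pi^{-1}(D\setminus K)$ connected, and $\wt{f}:=f\circ\pi$ is holomorphic on $\wt{D}\setminus\wt{K}$. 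It therefore suffices to produce $\wt{F}\in\OO(\wt{D})$ with $\wt{F}=\wt{f}$ on $\wt{D}\setminus\wt{K}$: via $\pi_*\OO_M=\OO_X$ this descends to the required $F\in\OO(D)$, and uniqueness follows from the identity theorem on the connected (locally irreducible) normal space $D$.

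The analytic heart is the vanishing $H^1_{cpt}(M,\OO)=0$. \emph{The main obstacle is that $M$ itself is in general no longer $(n-1)$-complete}: the resolution introduces compact exceptional sets which destroy completeness, so one cannot solve $\dq$ with compact support directly on $M$ by Andreotti--Grauert theory. The device to bypass this is to route the argument through the canonical sheaf $K_M$, which behaves well under the modification. Concretely, I would invoke Takegoshi's relative form of the Grauert--Riemenschneider vanishing theorem to get $R^q\pi_*K_M=0$ for all $q\geq 1$; the Leray spectral sequence then yields $H^{n-1}(M,K_M)\cong H^{n-1}(X,\pi_*K_M)$. As $\pi_*K_M$ is a \emph{coherent} sheaf on the $(n-1)$-complete space $X$, the Andreotti--Grauert vanishing theorem gives $H^{k}(X,\pi_*K_M)=0$ for $k\geq n-1$, whence $H^{n-1}(M,K_M)=0$. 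Serre duality on the $n$-dimensional manifold $M$ pairs $H^1_{cpt}(M,\OO)$ with $H^{n-1}(M,K_M)$; granting the relevant separation (closed range of $\dq$, which holds in this convexity setting) the vanishing of the latter forces $H^1_{cpt}(M,\OO)=0$.

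With the vanishing in hand I would run the classical argument of Ehrenpreis on $M$. Choose $\chi\in C^\infty_{cpt}(\wt{D})$ with $\chi\equiv 1$ on a neighbourhood of $\wt{K}$ and with $\supp\chi$ a compact \emph{proper} subset of $\wt{D}$, and set $\wt{f}_0:=(1-\chi)\wt{f}$, extended by $0$ across $\wt{K}$; this is smooth on $\wt{D}$. Then $g:=\dq\wt{f}_0=-\wt{f}\,\dq\chi$ is a $\dq$-closed $(0,1)$-form supported in the compact set $\supp\dq\chi\subset\wt{D}\setminus\wt{K}$, so, extended by $0$, it defines a class in $H^1_{cpt}(M,\OO)$. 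By the vanishing there is $u\in C^\infty_{cpt}(M)$ with $\dq u=g$, and $\wt{F}:=\wt{f}_0-u$ is then holomorphic on $\wt{D}$.

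It remains to identify $\wt{F}$ with $\wt{f}$. Off $\supp\chi$ the form $g$ vanishes, so $u$ is holomorphic there. Since $M$ is non-compact, $M\setminus\supp\chi$ admits a component $W$ which is not relatively compact and which meets $\wt{D}\setminus\supp\chi$ (for instance along the collar of $\partial\wt{D}$ inside $\wt{D}$); on $W$ the holomorphic, compactly supported function $u$ vanishes identically. Thus $\chi=0$ and $u=0$ on the nonempty open set $W\cap(\wt{D}\setminus\supp\chi)\subset\wt{D}\setminus\wt{K}$, where consequently $\wt{F}=\wt{f}$. As $\wt{D}\setminus\wt{K}$ is connected, the identity theorem propagates $\wt{F}=\wt{f}$ to all of $\wt{D}\setminus\wt{K}$, which completes the lift and hence, after descending to $X$, the proof.
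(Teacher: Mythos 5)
Your proposal is correct and follows essentially the same route as the paper: resolve singularities, establish $H^1_{cpt}(M,\mathcal{O})=0$ via Takegoshi's relative Grauert--Riemenschneider vanishing, the Leray spectral sequence, the Andreotti--Grauert vanishing for the coherent sheaf $\pi_*\Omega^n_M$ on the $(n-1)$-complete space $X$, and Serre duality, then run Ehrenpreis' compactly supported $\dq$-argument on $M$ and descend by normality. The only differences are cosmetic (the paper checks connectedness of $\pi^{-1}(D\setminus K)$ via irreducibility rather than connected fibres, and selects the component on which the solution vanishes by picking a point outside $\supp u\cup\supp\chi$ rather than invoking a non-relatively-compact component).
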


In the special case of a Stein space $X$ with only isolated singularities,
a $\dq$-theoretical proof of Hartogs' extension theorem was already given in \cite{Rp}.

\vspace{2mm}
The main points of the proof of Theorem \ref{thm:hartogs}
are as follows: Let $M$ be a complex manifold of dimension $n$,
$X$ a complex space and $\pi: M\rightarrow X$ a proper modification.
Then it follows by Takegoshi's generalization
of the Grauert-Riemenschneider vanishing theorem
that
\begin{eqnarray}\label{eq:01}
R^q\pi_* \Omega^n_M =0,\ q>0,
\end{eqnarray}
where $R^q \pi_* \Omega^n_M$ are the higher direct images of
the canonical sheaf $\Omega^n_M$ on $M$ (see Theorem \ref{thm:takegoshi}).
By use of the Leray spectral sequence, \eqref{eq:01} yields
\begin{eqnarray*}
H^q(M,\Omega^n_M) \cong H^q(X,\pi_* \Omega^n_M),
\end{eqnarray*}
and this implies with Serre duality that
$$H^1_{cpt}(M,\OO)\cong H^{n-1}(M,\Omega^n_M) =0$$
if $X$ is $(n-1)$-complete (see Theorem \ref{thm:takegoshi3}).
This is elaborated in section \ref{sec:cohomology},
while we will show in section \ref{sec:proof} that vanishing of $H^1_{cpt}(M,\OO)$ gives
Hartogs' extension theorem on $X$ by the $\dq$-technique of Ehrenpreis,
because the extension problem on $X$ can be reduced to an extension problem on 
the desingularization $M$.
In the last section, we give a few remarks on Takegoshi's vanishing theorem
for convenience of the reader.

\vspace{2mm}
For a more detailed introduction to the topic with a full historical record, remarks
and references, we refer to \cite{MePo1} and \cite{MePo2}.\\

\newpage

\section{Dolbeault Cohomology of Proper Modifications}\label{sec:cohomology}

In its general form,
the Grauert-Riemenschneider vanishing theorem (see \cite{GrRie}, Satz 2.1) states:

\begin{thm}\label{thm:GrRie1}
Let $X$ be an $n$-dimensional compact irreducible reduced complex space with
$n$ independent meromorphic functions (Moishezon), and let $\mathcal{S}$ be
a quasi-positive coherent analytic sheaf without torsion on $X$. Then:
$$H^q(X, \mathcal{S} \otimes \Omega^n_X) = 0, \ \ q>0,$$
where $\Omega^n_X$ is the sheaf of holomorphic $n$-forms on $X$ (the canonical sheaf),
defined in the sense of Grauert and Riemenschneider.
\end{thm}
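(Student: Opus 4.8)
The plan is to reduce the assertion to a vanishing statement for a semi-positive holomorphic line bundle on a compact complex manifold, and to settle that model case by $L^2$-Hodge theory via the Bochner-Kodaira-Nakano identity.

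First I would desingularize. Since $X$ is Moishezon, Hironaka's theorem furnishes a proper modification $\pi: M \to X$ with $M$ a connected compact complex manifold, and $M$ is again Moishezon; after finitely many further blow-ups along smooth centres it may be assumed projective, hence K\"ahler. By the definition of the canonical sheaf in the sense of Grauert and Riemenschneider, $\Omega^n_X = \pi_* \Omega^n_M$, and because the higher direct images of the canonical sheaf of a resolution vanish, the Leray spectral sequence identifies $H^q(X, \mathcal{S}\otimes \Omega^n_X)$ with the cohomology of the transform of $\mathcal{S}\otimes\Omega^n_M$ upstairs. Using a locally free resolution of the torsion-free sheaf $\mathcal{S}$ together with the very definition of quasi-positivity, the problem reduces to the following model statement: if $F \to M$ is a holomorphic line bundle carrying a smooth Hermitian metric whose Chern curvature $i\Theta(F)$ is positive semi-definite on all of $M$ and strictly positive on a non-empty open set $U$, then $H^q(M, \Omega^n_M \otimes F) = 0$ for $q > 0$. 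The higher-rank case is handled identically, with Nakano positivity in place of curvature positivity.

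For the model case I would invoke Hodge theory: by ellipticity of the $\dq$-Laplacian on the compact K\"ahler manifold $M$, every class in $H^q(M, \Omega^n_M\otimes F)$ is represented by a unique $F$-valued harmonic $(n,q)$-form $u$. For such a form the Bochner-Kodaira-Nakano identity reads
\begin{eqnarray*}
0 = \|\dq u\|^2 + \|\dq^{*} u\|^2 = \big\| {D'}^{*} u \big\|^2 + \int_M \big\langle [i\Theta(F),\Lambda]\, u,\, u \big\rangle ,
\end{eqnarray*}
where $D'$ is the $(1,0)$-part of the Chern connection, whose $(1,0)$-differential vanishes on forms of top holomorphic degree. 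Since $i\Theta(F)\geq 0$, the Hermitian endomorphism $[i\Theta(F),\Lambda]$ is positive semi-definite on $(n,q)$-forms for each $q\geq 1$, so both terms on the right vanish. In particular the curvature integrand vanishes pointwise, which on the positivity locus $U$, where all eigenvalues of $i\Theta(F)$ are strictly positive, forces $u\equiv 0$.

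The hard part is precisely the degeneracy of the metric: semi-positivity alone is too weak for Nakano's classical vanishing, so one must propagate $u\equiv 0$ from $U$ to all of $M$. Here I would appeal to unique continuation: the harmonic form satisfies the second-order elliptic system $\Delta'' u = 0$ whose principal symbol is scalar, so by Aronszajn's unique continuation theorem its vanishing on the open set $U$ forces $u\equiv 0$ on the connected manifold $M$. Alternatively, one regularizes the semi-positive metric to a strictly positive one and passes to the limit in the $L^2$-estimates for $\dq$ in the manner of Demailly, at the cost of controlling the constants carefully. Thus $\mathcal{H}^{n,q}(M,F)=0$ for $q>0$; unwinding the reductions yields the theorem.
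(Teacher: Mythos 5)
The paper does not actually prove this statement: Theorem \ref{thm:GrRie1} is quoted verbatim from Grauert--Riemenschneider (\cite{GrRie}, Satz 2.1), and the text only remarks that it is proved by harmonic theory and that ``the main point'' is the relative vanishing statement reproduced as Theorem \ref{thm:GrRie2}. So there is no in-paper proof to compare against; what you have written is a reconstruction of the original argument of \cite{GrRie}, and its core -- represent a class by an $F$-valued harmonic $(n,q)$-form, use the Bochner--Kodaira--Nakano identity and semi-positivity to kill the curvature term pointwise, conclude $u\equiv 0$ on the positivity locus, and propagate by Aronszajn's unique continuation for the scalar-symbol elliptic system $\Delta'' u=0$ -- is indeed the mechanism Grauert and Riemenschneider use. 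That part of your sketch is sound, including the observation that $D'u=0$ automatically in bidegree $(n,q)$ and that $\langle [i\Theta(F),\Lambda]u,u\rangle\geq(\lambda_{j_1}+\dots+\lambda_{j_q})|u_J|^2\geq 0$ there.

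The gaps are in the reduction to the model case. First, ``a locally free resolution of the torsion-free sheaf $\mathcal{S}$'' is not the right device: what is needed is the torsion-free preimage $\hat{\mathcal{S}}=\mathcal{S}\circ\pi=\pi^*\mathcal{S}/T(\pi^*\mathcal{S})$ on a suitably chosen modification on which it becomes locally free; quasi-positivity of $\mathcal{S}$ is \emph{defined} through Nakano semi-positivity (with positivity on a dense open set) of this bundle, and a homological resolution by locally free sheaves would not transport positivity at all. Second, to collapse the Leray spectral sequence you need $R^q\pi_*(\hat{\mathcal{S}}\otimes\Omega^n_M)=0$ for $q>0$, i.e.\ the \emph{twisted} relative vanishing of Theorem \ref{thm:GrRie2} -- precisely the statement the paper singles out as the main point -- together with the identification $\pi_*(\hat{\mathcal{S}}\otimes\Omega^n_M)\cong\mathcal{S}\otimes\Omega^n_X$, which itself uses torsion-freeness of $\mathcal{S}$ and a comparison outside a thin analytic set. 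Citing only the untwisted $R^q\pi_*\Omega^n_M=0$ does not suffice: over a Stein open set $V\subset X$ the bundle $\hat{\mathcal{S}}$ is a quotient of a free sheaf but not a direct summand, so the twisted vanishing is a genuine additional input (proved in \cite{GrRie} and \cite{Ta} by the same $L^2$ method with weight $\varphi\circ\pi$). With these two steps supplied, your argument closes and coincides in substance with the original one.
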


This generalization of Kodaira's famous vanishing theorem is also proved by means of harmonic theory.
The main point in the proof is 
(\cite{GrRie}, Satz 2.3):

\begin{thm}\label{thm:GrRie2}
Let $X$ be a projective complex space, $\mathcal{S}$ a quasi-positive coherent analytic sheaf
on $X$ without torsion, and let $\pi: M\rightarrow X$ be a resolution of singularities,
such that $\hat{\mathcal{S}}=\mathcal{S}\circ\pi$ is locally free on $M$.
Then:
$$R^q\pi_* ( \hat{\mathcal{S}}\otimes \Omega^n_M) =0, \ \ q>0.$$
\end{thm}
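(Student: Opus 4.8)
The plan is to reduce the statement, which concerns sheaves on $X$, to a global vanishing theorem on the compact manifold $M$ and to settle the latter by harmonic theory. Write $\mathcal{R}^q := R^q\pi_*(\hat{\mathcal{S}}\otimes\Omega^n_M)$; by Grauert's coherence theorem this is a coherent sheaf on $X$. Since $X$ is projective, fix an ample line bundle $L$ on $X$. A coherent sheaf on a projective space vanishes as soon as $H^0(X,\mathcal{R}^q\otimes L^k)=0$ for all sufficiently large $k$, because for $k\gg 0$ the twist $\mathcal{R}^q\otimes L^k$ is globally generated, hence determined by its sections. Thus it suffices to show $H^0(X,\mathcal{R}^q\otimes L^k)=0$ for $q>0$ and $k$ large.

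Next I would translate this back to $M$, which is itself projective, being a resolution of the projective space $X$. By the projection formula, $\mathcal{R}^q\otimes L^k\cong R^q\pi_*(\hat{\mathcal{S}}\otimes\Omega^n_M\otimes\pi^*L^k)$. For $k$ large, Serre's vanishing theorem gives $H^p(X,\mathcal{R}^q\otimes L^k)=0$ for $p>0$, so the Leray spectral sequence for $\pi$ collapses to the edge isomorphism
$$H^0(X,\mathcal{R}^q\otimes L^k)\cong H^q(M,\hat{\mathcal{S}}\otimes\Omega^n_M\otimes\pi^*L^k).$$
In this way everything is reduced to proving that $H^q(M,\Omega^n_M\otimes E)=0$ for $q>0$, where $E:=\hat{\mathcal{S}}\otimes\pi^*L^k$.

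The bundle $E$ is quasi-positive: the quasi-positivity of $\mathcal{S}$ endows the locally free $\hat{\mathcal{S}}$ with a Hermitian metric of semi-positive curvature, while $\pi^*L^k$ carries a semi-positive metric that is strictly positive away from the exceptional set of $\pi$; tensoring yields a metric on $E$ whose curvature is semi-positive everywhere and strictly positive at some point. The remaining and hardest step is then a Kodaira--Nakano type vanishing theorem on the compact Kähler manifold $M$: for the canonical bundle $\Omega^n_M$ twisted by the quasi-positive bundle $E$ one has $H^q(M,\Omega^n_M\otimes E)=0$ for $q>0$. For $E$ a line bundle this follows from the Bochner--Kodaira--Nakano identity, which shows that a harmonic $(n,q)$-form with values in $E$ must be annihilated by the curvature operator; \emph{semi}-positivity forces such a form to be covariantly constant along the positive directions, and the strict positivity at one point then propagates to make it vanish identically.

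For higher rank $\hat{\mathcal{S}}$ I would reduce to the line-bundle case by passing to the projectivized bundle $\mathbb{P}(\hat{\mathcal{S}})$ and its tautological bundle, using that direct images along the bundle projection recover the symmetric powers of $\hat{\mathcal{S}}$, so that the curvature positivity of the tautological line bundle encodes the Griffiths (respectively Nakano) positivity of $\hat{\mathcal{S}}$. The delicate point throughout is handling semi-positivity rather than strict positivity: the curvature term in the Bochner--Kodaira--Nakano estimate is then only nonnegative, and the conclusion genuinely depends on the strict positivity injected by $\pi^*L^k$, which is precisely what the projectivity hypothesis on $X$ guarantees.
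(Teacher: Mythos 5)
The paper does not actually prove this statement: it is imported verbatim from Grauert--Riemenschneider (\cite{GrRie}, Satz 2.3), and the only argument given in its vicinity is the deduction of the subsequent Corollary from it. So your proposal has to be measured against the original proof in \cite{GrRie}, and in outline it reconstructs essentially that argument: reduce $R^q\pi_*(\hat{\mathcal{S}}\otimes\Omega^n_M)=0$ to the vanishing of $H^0(X,\mathcal{R}^q\otimes L^k)$ for $k\gg 0$ via Serre's Theorem A, identify this group with $H^q(M,\hat{\mathcal{S}}\otimes\Omega^n_M\otimes\pi^*L^k)$ via the projection formula, Serre vanishing and the degenerating Leray spectral sequence, and kill the latter group by a Bochner--Kodaira argument for the quasi-positive bundle $\hat{\mathcal{S}}\otimes\pi^*L^k$ on the compact manifold $M$. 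This is the right skeleton, and it correctly isolates where the projectivity of $X$ enters (namely, only through the auxiliary twist $\pi^*L^k$), which is exactly the point the paper comments on when it says the statement is local but the proof is not.

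Three steps need more care before this is a proof. First, your propagation step is phrased as ``strict positivity at one point then propagates to make it vanish identically''; for a harmonic $(n,q)$-form $u$ the pointwise consequence $\langle[i\Theta,\Lambda]u,u\rangle\equiv 0$ only forces $u$ to vanish on the open set where the curvature is strictly positive, and passing from vanishing on a nonempty open set to $u\equiv 0$ requires Aronszajn-type unique continuation for the Laplacian. In the situation at hand you can and should avoid this: $\pi^*L^k$ is strictly positive on the dense open complement of the exceptional set, so $u$ vanishes on a dense set and hence everywhere by continuity alone. Second, you invoke a K\"ahler (indeed projective) structure on $M$; this is automatic for a Hironaka resolution obtained by blowing up smooth centers, but not for an arbitrary resolution of singularities, so either restrict to such resolutions or control the torsion terms in the Bochner--Kodaira identity. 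Third, the transfer of quasi-positivity from the coherent sheaf $\mathcal{S}$ on the singular space $X$ to the locally free sheaf $\hat{\mathcal{S}}=\mathcal{S}\circ\pi$ is not automatic --- this is precisely \cite{GrRie}, Satz 1.4, which the paper itself cites in the proof of its Corollary --- and your higher-rank reduction via the projectivized bundle needs the correct determinant twist in Le Potier's isomorphism together with a clear statement of whether Griffiths or Nakano positivity is being used, since for rank greater than one this distinction is exactly where such vanishing theorems can fail. None of these objections is fatal, but as written they are the points at which the argument is not yet complete.
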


Here, $\mathcal{S}\circ\pi$ denotes the torsion-free preimage sheaf:
$$\mathcal{S}\circ\pi:= \pi^* \mathcal{S} / T(\pi^* \mathcal{S}),$$
where $T(\pi^* \mathcal{S})$ is the coherent torsion sheaf of the preimage $\pi^*\mathcal{S}$
(see \cite{Gr2}, p. 61).
As a simple consequence of Theorem \ref{thm:GrRie2}, one can deduce:

\begin{cor}
Let $M$ be a Moishezon manifold of dimension $n$, and $X$ a projective variety
such that $\pi: M\rightarrow X$ is a resolution of singularities.
Then:
$$R^q \pi_* \Omega^n_M = 0, \ \ q>0,$$
where $R^q \pi_* \Omega^n_M$, $q>0,$ are the higher direct image sheaves of $\Omega^n_M$.
\end{cor}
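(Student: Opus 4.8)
The plan is to deduce the vanishing from Theorem \ref{thm:GrRie2} by choosing the twisting sheaf $\mathcal{S}$ cleverly, so that after pulling back to $M$ it becomes an invertible sheaf that can be stripped off. Since $X$ is a projective variety it carries an ample line bundle $L$. As an invertible sheaf, $L$ is locally free and hence torsion-free, and ampleness makes it quasi-positive in the sense of Grauert and Riemenschneider, so $\mathcal{S}:=L$ satisfies the hypotheses of Theorem \ref{thm:GrRie2}. Furthermore, $\pi^*L$ is already a line bundle on $M$, so it has no torsion and $\hat{\mathcal{S}}=\mathcal{S}\circ\pi=\pi^*L$ is locally free, as required. (Consistency of the setup: a resolution $M$ of the projective, hence Moishezon, variety $X$ is itself Moishezon, and $\pi$ is proper since it is a modification.)

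Applying Theorem \ref{thm:GrRie2} to this choice of $\mathcal{S}$ yields
$$R^q\pi_*\bigl(\pi^*L\otimes\Omega^n_M\bigr)=0,\quad q>0.$$
The key step is then the projection formula: since $L$ is locally free and $\pi$ is proper, for every $q$ one has
$$R^q\pi_*\bigl(\pi^*L\otimes\Omega^n_M\bigr)\cong L\otimes R^q\pi_*\Omega^n_M.$$
Combining the two displays gives $L\otimes R^q\pi_*\Omega^n_M=0$ for all $q>0$.

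Finally, because $L$ is invertible, tensoring by $L$ is a locally trivial rank-one operation, so $L\otimes\mathcal{G}=0$ forces $\mathcal{G}=0$ for any coherent sheaf $\mathcal{G}$; applied to $\mathcal{G}=R^q\pi_*\Omega^n_M$ this produces the desired conclusion $R^q\pi_*\Omega^n_M=0$ for $q>0$. I do not expect a genuine obstacle here, consistent with the statement being called a simple consequence: the only points requiring care are the verification that an ample line bundle on the projective variety $X$ is quasi-positive and torsion-free so that Theorem \ref{thm:GrRie2} applies, and the precise form of the projection formula for higher direct images under a proper morphism. Both are standard.
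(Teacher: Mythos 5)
Your proposal is correct and follows essentially the same route as the paper: the paper also takes a positive (ample) line bundle on the projective variety $X$, applies Theorem \ref{thm:GrRie2} to its sheaf of sections, and uses the projection formula $R^q\pi_*(\hat{\mathcal{S}}\otimes\Omega^n_M)\cong R^q\pi_*\Omega^n_M\otimes\mathcal{S}$ to strip off the invertible twist. Your write-up merely makes explicit the final step that tensoring with an invertible sheaf is faithful, which the paper leaves implicit.
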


\begin{proof}
Let $F$ be a positive holomorphic line bundle on $X\subset \C\mathbb{P}^L$,
and $\mathcal{S}$ the sheaf of sections in $F$. So, $S\circ\pi$ is a positive
locally free sheaf on $M$ (\cite{GrRie}, Satz 1.4), and as in \cite{GrRie}, Satz 2.4,
it follows from Theorem \ref{thm:GrRie2} that
$$R^q\pi_* \Omega^n_M \otimes \mathcal{S} = R^q \pi_* (\hat{\mathcal{S}}\otimes\Omega^n_M) =0$$
for $q>0$ which implies the statement.
\end{proof}

As Grauert and Riemenschneider mention already in their original paper \cite{GrRie},
this statement is of local nature and doesn't depend on the projective embedding (whereas their proof does).
And in fact, the result was generalized later
by K. Takegoshi (see \cite{Ta}, Corollary I):

\begin{thm}\label{thm:takegoshi}
Let $M$ be a complex manifold of dimension $n$, and $X$ a complex space such that
$\pi: M\rightarrow X$ is a proper modification. Then:
$$R^q \pi_* \Omega^n_M=0,\ \ q>0.$$
\end{thm}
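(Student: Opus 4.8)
The plan is to exploit that the assertion is \emph{local on the base} $X$: the higher direct image $R^q\pi_*\Omega^n_M$ is by definition the sheaf associated to the presheaf $U\mapsto H^q(\pi^{-1}(U),\Omega^n_M)$, so it suffices to fix a point $x\in X$ and to show that every class in $H^q(\pi^{-1}(U),\Omega^n_M)$, $q>0$, is killed after shrinking $U$. Since $\pi$ is proper, the fibre $A:=\pi^{-1}(x)$ is compact and the sets $M_U:=\pi^{-1}(U)$ form a neighbourhood basis of $A$; moreover $\pi$ is biholomorphic over $X\setminus S$ for a nowhere dense analytic set $S$, and $A$ lies in the exceptional set $E:=\pi^{-1}(S)$. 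By the Dolbeault isomorphism we may work with $H^{n,q}_{\dq}(M_U)$ in place of the sheaf cohomology. As the Grauert--Riemenschneider result is already available once $X$ is projective (the Corollary above), the only genuinely new ingredient is to \emph{remove the projective embedding} and replace it by an intrinsic, local $L^2$-argument.

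First I would endow $M_U$ with a suitable weight and metric. Since $X$ is Stein near $x$, it carries a strictly plurisubharmonic exhaustion $\varphi$; choosing $U=\{\varphi<c\}$ and setting $\psi:=\varphi\circ\pi$ yields a smooth plurisubharmonic exhaustion of $M_U$ which is strictly plurisubharmonic off the exceptional set $E$. I would then fix a Hermitian metric on $M_U$, complete towards the boundary $\partial M_U$ (letting the metric blow up there via $\psi$), and use $e^{-\psi}$ as a weight on the canonical sheaf $\Omega^n_M=K_M$. The purpose of the weight is that its curvature contribution $i\partial\dq\psi$ is semipositive everywhere and strictly positive outside $E$.

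Next I would invoke $L^2$-Hodge theory. With a complete metric every $L^2$ Dolbeault class of type $(n,q)$ is represented by a square-integrable $\dq$-harmonic form, and for such a form $u$ the Bochner--Kodaira--Nakano identity gives, after integration by parts with no boundary terms,
\begin{equation*}
0=\|\dq u\|^2+\|\dq^* u\|^2\ \geq\ \int_{M_U}\big\langle [\,i\partial\dq\psi,\Lambda\,]\,u,u\big\rangle\, e^{-\psi}\ \geq\ 0 .
\end{equation*}
In bidegree $(n,q)$ with $q\geq 1$ the commutator $[\,i\partial\dq\psi,\Lambda\,]$ is nonnegative and is strictly positive wherever $\psi$ is strictly plurisubharmonic, i.e. on $M_U\setminus E$. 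Hence $u$ vanishes on $M_U\setminus E$, and since $E$ is a proper analytic subset a removable-singularity argument forces $u\equiv 0$. Thus $H^{n,q}_{(2)}(M_U)=0$ for $q>0$, and tracing this back through the localization yields $R^q\pi_*\Omega^n_M=0$.

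The hard part will be the two analytic reconciliations hidden in this outline. The first is matching the $L^2$ cohomology $H^{n,q}_{(2)}(M_U)$ with the genuine stalk of $R^q\pi_*\Omega^n_M$: on the noncompact manifold $M_U$, with its awkward boundary and the exceptional set $E$, one must show that an arbitrary cohomology class admits an $L^2$ representative and, conversely, that $L^2$-triviality descends to ordinary triviality after shrinking $U$. The second is the tension between \emph{completeness} of the metric, which wants rapid blow-up at $\partial M_U$, and the regularity and finiteness needed to run the Bochner--Kodaira integration by parts without boundary contributions; at the same time, since $\psi$ is only degenerate-positive along $E$, one must argue that the negligible exceptional set cannot carry a nontrivial harmonic form. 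Controlling the behaviour of the harmonic representatives along $E$ is, to my mind, the crux of the whole argument.
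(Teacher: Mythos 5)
Your overall strategy --- localize over a Stein neighbourhood of $x\in X$, pull back a strictly plurisubharmonic exhaustion to $\pi^{-1}(U)$, and kill $(n,q)$-classes with $q\geq 1$ by a weighted Bochner--Kodaira--Nakano argument --- is exactly the spirit of Takegoshi's proof, which the paper sketches in its final section. But there is a genuine gap at the step you treat as routine: you ``fix a Hermitian metric on $M_U$'' and then apply the Bochner--Kodaira--Nakano identity in its clean K\"ahler form. For a general Hermitian (non-K\"ahler) metric the identity acquires torsion terms, and the inequality $\|\dq u\|^2+\|\dq^*u\|^2\geq\int\langle[i\partial\dq\psi,\Lambda]u,u\rangle e^{-\psi}$ does not hold as stated. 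The total space of a proper modification need not be of K\"ahler type, not even locally over a Stein base, so you cannot simply choose a K\"ahler metric on $M_U$. This is precisely the difficulty the paper singles out as ``crucial'', and it is resolved by Hironaka's Chow lemma (Theorem \ref{thm:hironaka}): one passes to a further modification $h\colon M'\to M$ with $\pi'=\pi\circ h$ a locally finite sequence of blow-ups, so that over a Stein $V$ the preimage $V'=\pi'^{-1}(V)$ \emph{is} K\"ahler and weakly $1$-complete; Takegoshi's $L^2$ vanishing theorem (Theorem \ref{thm:kaehler}) applies there, and the vanishing is transported back to $M$ via $R^qh_*\Omega^n_{M'}=0$ and $h_*\Omega^n_{M'}=\Omega^n_M$, giving $H^q(\pi^{-1}(V),\Omega^n_M)\cong H^q(V',\Omega^n_{M'})=0$. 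Your outline contains no substitute for this reduction.

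A secondary point: the identification of the stalk of $R^q\pi_*\Omega^n_M$ with an $L^2$ or harmonic cohomology group, which you correctly flag as delicate, is handled in the paper not by a direct comparison of $L^2$ and ordinary cohomology after shrinking $U$, but through the Hausdorff property of $H^q(V',\Omega^n_{M'})$ (a consequence of holomorphic convexity of $V'$, citing Laufer and Prill), which enters as the hypothesis $A(p,q)$ of Theorem \ref{thm:kaehler}. You would need an analogous device; asserting that $L^2$-triviality descends to ordinary triviality is not automatic. By contrast, your final step --- that a harmonic representative vanishing off the exceptional set vanishes everywhere --- is unproblematic by continuity and needs no removable-singularity argument, but it presupposes the K\"ahler identity you have not secured.
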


The nice proof consists mainly of a vanishing theorem on weakly $1$-complete Kähler manifolds 
which is based on $L^2$-estimates for the $\dq$-operator.
As an easy consequence, we deduce:

\begin{thm}\label{thm:takegoshi2}
Let $M$ be a complex manifold of dimension $n$, and $X$ a complex space such that
$\pi: M\rightarrow X$ is a proper modification. Then:
\begin{eqnarray}\label{eq:take1}
H^{n,q}(M) \cong H^q(M,\Omega^n_M) \cong H^q(X,\pi_* \Omega^n_M).
\end{eqnarray}
\end{thm}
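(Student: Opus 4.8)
The plan is to treat the two isomorphisms in \eqref{eq:take1} separately: the first is purely local on the manifold $M$ (the Dolbeault isomorphism), and the second is where Takegoshi's vanishing theorem enters through the Leray spectral sequence.

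For the first isomorphism $H^{n,q}(M) \cong H^q(M,\Omega^n_M)$, I would invoke the Dolbeault--Grothendieck lemma. The $\dq$-Poincar\'e lemma shows that the complex of sheaves of smooth $(n,\bullet)$-forms,
$$0 \longrightarrow \Omega^n_M \longrightarrow \mathcal{A}^{n,0}_M \xrightarrow{\ \dq\ } \mathcal{A}^{n,1}_M \xrightarrow{\ \dq\ } \cdots,$$
is a resolution of the canonical sheaf $\Omega^n_M$. Since each $\mathcal{A}^{n,q}_M$ is a fine sheaf (it admits partitions of unity because $M$ is paracompact), this resolution is acyclic, and hence the sheaf cohomology $H^q(M,\Omega^n_M)$ is computed by the cohomology of the complex of global sections, which is precisely the Dolbeault cohomology $H^{n,q}(M)$.

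For the second isomorphism, I would apply the Leray spectral sequence to the proper modification $\pi\colon M\rightarrow X$. Its $E_2$-page reads
$$E_2^{p,q} = H^p\big(X, R^q\pi_*\Omega^n_M\big) \Longrightarrow H^{p+q}(M,\Omega^n_M).$$
By Theorem \ref{thm:takegoshi} we have $R^q\pi_*\Omega^n_M = 0$ for all $q>0$, so the $E_2$-page is concentrated in the single row $q=0$. A spectral sequence with only one non-vanishing row degenerates at $E_2$, and the abutment collapses onto that row, yielding
$$H^q(M,\Omega^n_M) \cong H^q\big(X, \pi_*\Omega^n_M\big)$$
for every $q$, as claimed.

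I do not expect a serious obstacle, since the statement is essentially a formal consequence of Takegoshi's theorem. The only point requiring care is the verification of the hypotheses for the Leray spectral sequence: properness of $\pi$ (guaranteed because it is a proper modification) ensures that the higher direct image sheaves $R^q\pi_*\Omega^n_M$ are the relevant objects and that the underlying spaces are paracompact, so that the spectral sequence is available and its degeneration is legitimate.
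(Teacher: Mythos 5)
Your proposal is correct and follows the first route the paper itself names: the Dolbeault isomorphism for $H^{n,q}(M)\cong H^q(M,\Omega^n_M)$, and the Leray spectral sequence degenerating on the row $q=0$ because Takegoshi's theorem gives $R^q\pi_*\Omega^n_M=0$ for $q>0$. The paper's written-out details instead elaborate the alternative it mentions --- pushing forward the fine Dolbeault resolution and applying the abstract de Rham theorem --- but both arguments rest on exactly the same vanishing, so your proof is essentially the paper's.
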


\begin{proof}
The proof follows directly by the Leray spectral sequence.
Alternatively, on can use the abstract theorem of deRham:
Let 
$$0 \rightarrow \Omega^n_M \hookrightarrow
\mathcal{C}^\infty_{n,0} \xrightarrow{\ \dq \ }
\mathcal{C}^\infty_{n,1} \xrightarrow{\ \dq \ }
\cdots \xrightarrow{\ \dq \ }
\mathcal{C}^\infty_{n,n} \rightarrow 0$$
be the Dolbeault complex on $M$ which is a fine resolution of $\Omega^n_M$.
Then, it follows by Theorem \ref{thm:takegoshi} that the direct image sequence
\begin{eqnarray}\label{eq:seq2}
0 \rightarrow \pi_* \Omega^n_M \hookrightarrow
\pi_* \mathcal{C}^\infty_{n,0} \xrightarrow{\ \pi_* \dq \ }
\pi_* \mathcal{C}^\infty_{n,1} \xrightarrow{\ \pi_* \dq \ }
\cdots \xrightarrow{\ \pi_* \dq \ }
\pi_* \mathcal{C}^\infty_{n,n} \rightarrow 0
\end{eqnarray}
is again exact because of the properties of the direct image functor, or since
$$\left(R^q\pi_* \Omega^n_M\right)_x= \lim_{\substack{\rightarrow\\ x\in U}} H^q(\pi^{-1}(U),\Omega^n_M).$$

It is clear that the direct image sheaves $\pi_* \mathcal{C}^\infty_{n,q}$ are fine, as well.
So, the sequence \eqref{eq:seq2} is a fine resolution of $\pi_* \Omega^n_M$,
and the abstract Theorem of deRham implies that:
\begin{eqnarray}\label{eq:seq3}
H^q(X,\pi_* \Omega^n_M) \cong \frac{\mbox{Ker}
(\pi_*\dq: \pi_*\mathcal{C}^\infty_{n,q}(X) \rightarrow \pi_*\mathcal{C}^\infty_{n,q+1}(X))}
{\mbox{Im}
(\pi_*\dq: \pi_*\mathcal{C}^\infty_{n,q-1}(X) \rightarrow \pi_*\mathcal{C}^\infty_{n,q}(X))},
\end{eqnarray}
but the right hand side of \eqref{eq:seq3} is nothing else but
\begin{eqnarray*}
\frac{\mbox{Ker}
(\dq: \mathcal{C}^\infty_{n,q}(M) \rightarrow \mathcal{C}^\infty_{n,q+1}(M))}
{\mbox{Im}
(\dq: \mathcal{C}^\infty_{n,q-1}(M) \rightarrow \mathcal{C}^\infty_{n,q}(M))}
= H^{n,q}(M) \cong H^q(M,\Omega^n_M).
\end{eqnarray*}
\end{proof}

Now, if the space $X$ has nice properties,
we can deduce consequences for the Dolbeault cohomology on $M$.
In this paper, we are particularly interested in:

\begin{thm}\label{thm:takegoshi3}
Let $M$ be a complex manifold of dimension $n$, and $X$ a $k$-complete complex space such that
$\pi: M\rightarrow X$ is a proper modification. Then:
\begin{eqnarray}\label{eq:take2}
H^{n-q}_{cpt}(M,\OO) \cong H^q(M,\Omega^n_M) = 0, \ \ k\leq q\leq n.
\end{eqnarray}
\end{thm}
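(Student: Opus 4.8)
The plan is to reduce everything to the right-hand group $H^q(M,\Omega^n_M)$, kill it using Theorem \ref{thm:takegoshi2} together with the Andreotti--Grauert vanishing theorem on the $k$-complete space $X$, and then transport the result to the compactly supported cohomology $H^{n-q}_{cpt}(M,\OO)$ by means of Serre duality on the (non-compact) manifold $M$. The isomorphism in \eqref{eq:take2} is precisely the Serre pairing, and the vanishing on the left will drop out once the vanishing on the right is in hand.

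First I would establish the vanishing of $H^q(M,\Omega^n_M)$. Since $\pi$ is proper, Grauert's direct image theorem shows that $\pi_*\Omega^n_M$ is a coherent analytic sheaf on $X$. By Theorem \ref{thm:takegoshi2} we have
$$H^q(M,\Omega^n_M) \cong H^q(X,\pi_*\Omega^n_M)$$
for every $q$. As $X$ is $k$-complete, the Andreotti--Grauert vanishing theorem gives $H^q(X,\mathcal{F})=0$ for every coherent analytic sheaf $\mathcal{F}$ on $X$ and every $q\geq k$. Applying this with $\mathcal{F}=\pi_*\Omega^n_M$ yields
$$H^q(M,\Omega^n_M) \cong H^q(X,\pi_*\Omega^n_M)=0, \quad k\leq q\leq n.$$

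Next I would invoke Serre duality on the $n$-dimensional manifold $M$. In the Dolbeault picture, integration $(\alpha,\beta)\mapsto\int_M\alpha\wedge\beta$ furnishes a pairing
$$H^q(M,\Omega^n_M)\times H^{n-q}_{cpt}(M,\OO) \longrightarrow \C,$$
the bundle factor being $(\Omega^n_M)^*\otimes\Omega^n_M=\OO$. Serre's duality theorem asserts that this pairing is perfect, identifying $H^{n-q}_{cpt}(M,\OO)$ with the topological dual of $H^q(M,\Omega^n_M)$, provided both $H^q(M,\Omega^n_M)$ and $H^{q+1}(M,\Omega^n_M)$ are Hausdorff in their natural topologies. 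For $k\leq q\leq n$ both of these groups have just been shown to vanish (the index $q+1$ either again lies in $[k,n]$ or exceeds $n$, where the group is trivially zero), so the separation hypothesis is satisfied for free. The pairing therefore exhibits $H^{n-q}_{cpt}(M,\OO)$ as the dual of $H^q(M,\Omega^n_M)=0$, so that $H^{n-q}_{cpt}(M,\OO)=0$ as well, and the isomorphism in \eqref{eq:take2} degenerates to the equality of two zero groups.

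The delicate point, which I expect to be the main obstacle, is the legitimacy of Serre duality itself: because $M$ is not compact, the elementary compact version does not apply, and one must use the non-compact formulation, whose conclusion is conditional on the Hausdorff property of the relevant Dolbeault groups. What makes the argument go through cleanly is that the Andreotti--Grauert vanishing already forces those groups to be separated; even had one only extracted finite-dimensionality (which also holds here, $X$ being in particular $k$-convex) instead of outright vanishing, the same reasoning would apply, since finite-dimensional cohomology is automatically Hausdorff and hence dualizable.
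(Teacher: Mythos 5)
Your proposal is correct and follows essentially the same route as the paper: reduce to $H^q(X,\pi_*\Omega^n_M)$ via Theorem \ref{thm:takegoshi2}, kill it by coherence (Grauert's direct image theorem) plus the vanishing theorem for $k$-complete spaces, and then transfer to $H^{n-q}_{cpt}(M,\OO)$ by the non-compact form of Serre duality, whose separatedness hypothesis (Serre's Proposition 6) is satisfied because the relevant groups already vanish. Your explicit discussion of why the Hausdorff condition holds is exactly the content the paper compresses into its citation of Serre's criterion.
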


\begin{proof}
Since $X$ is $k$-complete, it follows from Corollary \ref{thm:takegoshi2} that
\begin{eqnarray}\label{eq:vollst}
H^q(M,\Omega^n_M) \cong H(X, \pi_* \Omega^n_M) =0, \ \ k\leq k\leq n,
\end{eqnarray}
because $\pi_* \Omega^n_M$ is coherent by Grauert's direct image theorem (see \cite{Gr2}).
Serre's criterion (\cite{Se}, Proposition 6) tells us that we can apply Serre duality
(\cite{Se}, Th\'eorème 2) to the cohomology groups in \eqref{eq:vollst},
and we get the duality
$$H^{n-q}_{cpt}(M,\OO) \cong H^q(M,\Omega^n_M), \ \ k\leq q\leq n.$$
\end{proof}


\section{Proof of Theorem \ref{thm:hartogs}}\label{sec:proof}

The assumption about normality implies that $X$ is reduced.
Let $$\pi: M \rightarrow X$$ 
be a resolution of singularities,
where $M$ is a complex connected manifold of dimension $n$,
and $\pi$ is a proper holomorphic surjection.
Let $E:=\pi^{-1}(\Sing X)$ be the exceptional set of the desingularization.
Note that
\begin{eqnarray}\label{eq:bih}
\pi|_{M\setminus E}: M\setminus E \rightarrow X\setminus \Sing X
\end{eqnarray}
is a biholomorphic map. For the topic of desingularization we refer to \cite{AHL}, \cite{BiMi} and \cite{Ha}.\\

First, we will observe that the extension problem on $X$ can be reduced to an analogous extension problem
on $M$. Let
$$D':=\pi^{-1}(D),\ K':=\pi^{-1}(K),\ F:=f\circ \pi \in \mathcal{O}(D'\setminus K').$$
Clearly, $D'$ is an open set and $K'$ is compact with $K'\subset D'$
since $\pi$ is a proper holomorphic map.
$D\setminus K$ is a connected normal complex space.
So, it is connected, reduced and locally irreducible, hence 
globally irreducible as well (see \cite{GrRe}).
But then, $D\setminus K\setminus\Sing X$ is still connected.
So, the same is true for  $D'\setminus K'\setminus E$ because of \eqref{eq:bih}. 
But then $D'\setminus K'$ and $D'$ are connected, too.
That means that the assumptions on $D$ and $K$ behave well under desingularization.\\

Now, assume that there exists an extension $\wt{F}\in \OO(D')$ of $F$ such that
$$\wt{F}|_{D'\setminus K'} \equiv F.$$
But then
$$\wt{f} := \wt{F} \circ \big(\pi|_{D'\setminus E}\big)^{-1} \ \in \mathcal{O}(D\setminus \Sing X)$$
equals $f$ on an open subset (of $D\setminus K$), and it has an extension to the whole domain $D$
by Riemann's extension theorem for normal spaces (see \cite{GrRe}, for example).
Hence, 
$$\wt{f}\in \OO(D),\ \ \ \wt{f}|_{D\setminus K}\equiv f,$$
and the extension $\wt{f}$ is unique because $D\setminus K$ is connected and
$X$ is globally and locally irreducible (see again \cite{GrRe}). 
It only remains to show that such an extension $\wt{F}\in\OO(D')$ exists.
But this follows from $H^1_{cpt}(M,\OO)=0$ by Ehrenpreis' $\dq$-technique (see \cite{Eh})
as we will describe in the remainder of this section.\\

Let 
$$\chi\in C^\infty_{cpt}(M)$$
be a smooth cut-off function that is identically one in a neighborhood of $K'$
and has compact support 
$$C:=\supp\chi\subset\subset D'.$$

Consider
$$G:=(1-\chi) F \in C^\infty(D'),$$
which is an extension of $F$ to $D'$, but unfortunately not holomorphic. We have to fix it by
the idea of Ehrenpreis. So, let
$$\omega:=\dq G \in C^\infty_{(0,1),cpt}(D'),$$
which is a $\dq$-closed $(0,1)$-form with compact support in $D'$.
We may consider $\omega$ as a form on $M$ with compact support.
But $H^1_{cpt}(M,\OO)=0$ by Theorem \ref{thm:takegoshi3}.
So, there exists $g\in C^\infty_{cpt}(M)$ such that
$$\dq g=\omega,$$
and $g$ is holomorphic on $M\setminus C$ (where $\omega=\dq G=\dq F=0$).
Let
\begin{eqnarray}\label{eq:wtf}
\wt{F}:=(1-\chi)F-g \in \OO(D').
\end{eqnarray}

\newpage

We have to show that $g\equiv 0$ on an open subset of $D'\setminus C$.
To see this, let
$$A:=\supp g.$$
Then, $M\setminus \big(A\cup C\big)\neq \emptyset$ because $A\cup C$ is compact
but $M$ is not. If we would assume that $M$ is compact,
$X$ would be compact as well.
But $X$ is an $(n-1)$-complete space which admitts an $(n-1)$-convex exhaustion function
which cannot have a local maximum. So, $X$ and $M$ cannot be compact,
and there exists a point
$$p\in M\setminus\big(A\cup C\big)\neq \emptyset.$$
Let $V$ be the open connected component of $M\setminus C$ that contains the point $p$.
$g$ is holomorphic on $V$ and vanishes in a neighborhood of $p$,
hence $g\equiv 0$ on $V$.
On the other hand
$V\cap D'$ is not empty since $C\subset\subset D'$ and $M$ is connected.
So, $g\equiv 0$ on $V\cap D'\neq\emptyset$,
and this implies by \eqref{eq:wtf} that
$\wt{F}\equiv F$ on $V\cap D'$ (which is not empty).
But
$$\emptyset \neq V\cap D'\ \subset\ D'\setminus C\ \subset\ D'\setminus K',$$
where $D'\setminus K'$ is connected,
and by the identity theorem we obtain
$$\wt{F}|_{D'\setminus K'} \equiv F$$
just as needed.

\section{Remarks on Takegoshi's Vanishing Theorem \ref{thm:takegoshi}}\label{sec:remarks}

Let $N$ be a complex manifold of dimension $n$ and $\Phi\in C^\infty(N)$
a real valued function on $N$. We denote by $H(\Phi)_p$ the complex Hessian
of $\Phi$ at $p\in N$, and set
$$\sigma(\Phi) = \max_{p\in N} \mbox{rank}\ H(\Phi)_p.$$
For $\Phi$ and $\Omega^n_N$, the sheaf of germs of holomorphic $n$-forms on $N$,
we denote by $A(p,q)$ the following assertion:
$\sigma(\Phi)=n-p+1$ and $H^q(N,\Omega^n_N)$, $H^{q+1}(N,\Omega^n_N)$ are Hausdorff.
$N$ is called weakly $1$-complete if it possesses a $C^\infty$ plurisubharmonic
exhaustion function. The key point in Takegoshi's vanishing theorem is the following
result which is proved by use of K\"ahler identities and a priori $L^2$ estimates 
for the $\dq$-operator (\cite{Ta}, Theorem 2.1):

\begin{thm}\label{thm:kaehler}
Let $N$ be a weakly $1$-complete K\"ahler manifold with a \linebreak $C^\infty$-plurisubharmonic
exhaustion function $\Phi$. Suppose with respect to $\Omega^n_N$ and $\Phi$ that
$A(p,q)$ is true for $q\geq p\geq 1$. Then: $H^q(N,\Omega^n_N)\cong H^{n-q}_{cpt}(N,\OO)=0$.
\end{thm}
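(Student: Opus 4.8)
The plan is to split the statement into its two halves — the isomorphism $H^q(N,\Omega^n_N)\cong H^{n-q}_{cpt}(N,\OO)$, which is formal once the groups are separated, and the vanishing, which is the analytic core — and to obtain the vanishing by $L^2$-Hodge theory based on the Bochner-Kodaira-Nakano inequality, the subtle point being that $\Phi$ is merely plurisubharmonic, so the relevant curvature is only \emph{semi}-positive. First I would dispose of the isomorphism: the Hausdorff hypothesis on $H^q(N,\Omega^n_N)$ and $H^{q+1}(N,\Omega^n_N)$ built into $A(p,q)$ is exactly what makes these Dolbeault groups separated topological vector spaces, so that Serre duality (\cite{Se}) applies and yields $H^q(N,\Omega^n_N)\cong H^{n-q}_{cpt}(N,\OO)$. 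The same Hausdorffness guarantees that $\dq$ has closed range in bidegree $(n,q)$, so that the cohomology is represented by the space $\mathcal{H}^{n,q}$ of $L^2$-harmonic $(n,q)$-forms; everything is thereby reduced to proving $\mathcal{H}^{n,q}=0$.

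Next I would set up the analytic framework. Since $N$ carries the plurisubharmonic exhaustion $\Phi$, I would pass to a complete K\"ahler setting — for instance by modifying the given K\"ahler form to $\omega+i\partial\dq\chi(\Phi)$ with $\chi$ convex increasing, or by exhausting $N$ by the sublevel sets $\{\Phi<c\}$ and taking a limit — so that smooth compactly supported forms are dense in the graph norm of $\dq\oplus\dq^*$ and no boundary terms appear. Introducing a weight $e^{-\chi(\Phi)}$ and writing the Bochner-Kodaira-Nakano identity for $K_N$-valued $(0,q)$-forms (where the canonical-bundle curvature drops out for $(n,q)$-forms), I would obtain the a priori estimate, schematically,
$$\|\dq u\|^2+\|\dq^* u\|^2\ \geq\ \int_N \langle [\,i\partial\dq\chi(\Phi),\Lambda\,]\,u,u\rangle\, e^{-\chi(\Phi)}\,dV,$$
whose integrand, for an $(n,q)$-form, is bounded below by the sum of the $q$ smallest eigenvalues of $i\partial\dq\chi(\Phi)$ times $|u|^2$.

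The heart of the argument is to extract vanishing from semi-positivity alone. Because $\Phi$ is plurisubharmonic, $i\partial\dq\Phi\geq 0$, so the right-hand side is $\geq 0$; applied to a harmonic form $u$ (so $\dq u=0$ and $\dq^* u=0$) it forces the integrand to vanish identically, i.e. $u$ lies pointwise in the kernel of $[\,i\partial\dq\Phi,\Lambda\,]$. Here $A(p,q)$ enters decisively: $\sigma(\Phi)=n-p+1$ means $i\partial\dq\Phi$ has at most $n-p+1$ positive, hence at least $p-1$ vanishing, eigenvalues everywhere, with exactly $p-1$ zero directions on the nonempty open set where the rank is maximal. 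For an $(n,q)$-form the minimal eigenvalue sum uses $q$ directions, and since $q\geq p>p-1$ it must include a positive direction at a maximal-rank point; thus $[\,i\partial\dq\Phi,\Lambda\,]$ is positive definite there and its kernel is trivial, forcing $u=0$ on this open set. Unique continuation for the elliptic Laplacian $\dq\dq^*+\dq^*\dq$ then gives $u\equiv 0$ on (connected) $N$, whence $\mathcal{H}^{n,q}=0$ and the theorem follows.

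The hard part, as this makes clear, is precisely the semi-positivity: a naive Nakano argument would require a strictly positive curvature term, which a plurisubharmonic $\Phi$ cannot provide. Reconciling this is the crux, and it rests on the interplay between the rank bound $\sigma(\Phi)=n-p+1$ and the degree condition $q\geq p$, which together make the Levi-form operator injective on $(n,q)$-forms at maximal-rank points, and the Hausdorff hypotheses, which alone legitimize both the closed-range passage from $\mathcal{H}^{n,q}$ to the topological cohomology and the appeal to Serre duality. A subsidiary technical difficulty is the choice of $\chi$ and the arrangement of completeness, needed to make the Bochner-Kodaira-Nakano inequality valid without boundary contributions on the a priori incomplete, non-compact manifold $N$.
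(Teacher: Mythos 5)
First, a point of comparison: the paper contains no proof of this statement at all. Theorem \ref{thm:kaehler} is quoted verbatim from Takegoshi (\cite{Ta}, Theorem 2.1), and the paper says only that it ``is proved by use of K\"ahler identities and a priori $L^2$ estimates for the $\dq$-operator.'' So there is no in-paper argument to measure you against; what follows assesses your sketch on its own terms. Its overall shape is the right one and matches the method the paper attributes to \cite{Ta}: pass to a complete K\"ahler metric, introduce a weight $e^{-\chi(\Phi)}$, apply the Bochner--Kodaira--Nakano inequality to a harmonic $(n,q)$-form, and use the rank condition $\sigma(\Phi)=n-p+1$ together with $q\geq p$ to see that the sum of the $q$ smallest eigenvalues of the semi-positive form $i\partial\dq\Phi$ is strictly positive on the (nonempty, open) maximal-rank set, forcing the harmonic form to vanish there. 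That eigenvalue count is correct, and the concluding appeal to unique continuation for the (diagonal principal part, elliptic) weighted $\dq$-Laplacian is legitimate, granted that $N$ is connected.

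The genuine gap is the sentence ``the same Hausdorffness guarantees that $\dq$ has closed range in bidegree $(n,q)$, so that the cohomology is represented by the space $\mathcal{H}^{n,q}$ of $L^2$-harmonic forms.'' Hausdorffness of $H^q(N,\Omega^n_N)$ means $\dq$ has closed range in the Fr\'echet space of smooth forms; it does not formally give closed range in a weighted $L^2$ Hilbert space, nor does it identify the smooth Dolbeault cohomology with the weighted $L^2$-cohomology for the weight $\chi(\Phi)$ you need. One must show (i) that every smooth class admits a representative lying in $L^2_{\chi(\Phi)}$ for a suitably fast-growing $\chi$, (ii) that this class then has a harmonic representative in that Hilbert space, and (iii) that vanishing of the $L^2$-class implies vanishing of the original Fr\'echet class. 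This reduction -- carried out in \cite{Ta} by exhausting $N$ by the sublevel sets of $\Phi$ together with approximation and functional-analytic arguments in the spirit of \cite{Se} and \cite{La} -- is the technical heart of the theorem, and it is precisely where the hypothesis that $H^{q+1}(N,\Omega^n_N)$ is \emph{also} Hausdorff must be used. Your argument never invokes that hypothesis, which is the tell-tale sign that this step has been asserted rather than proved. Everything else in your sketch (the Serre-duality half, the completeness of $\omega+i\partial\dq\chi(\Phi)$, the curvature computation) is sound modulo this reduction.
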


Now, for the proof of Theorem \ref{thm:takegoshi}, a crucial question is 
how to get a K\"ahler metric involved if $\pi: M\rightarrow X$ is a proper modification,
but $M$ not necessary of K\"ahler type. This problem can be settled by
Hironaka's Chow lemma (see \cite{Hi}):

\begin{thm}\label{thm:hironaka}
Let $\pi: M \rightarrow X$ be a proper modification of complex spaces, $X$ reduced.
Then there exists a proper modification $\pi': M'\rightarrow X$ which is a locally finite (with respect to $X$)
sequence of blow-ups and a holomorphic map $h: M'\rightarrow M$ such that
$\pi'=\pi\circ h$.
\end{thm}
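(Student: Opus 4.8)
The plan is to prove the statement locally over $X$ and then to assemble the local solutions, exploiting that the desired sequence of blow-ups is only required to be \emph{locally finite} with respect to $X$. So I would first fix a locally finite cover of $X$ by relatively compact open sets $W\Subset X$ and reduce the theorem to the local assertion that, over each such $W$, the restricted modification $\pi\colon\pi^{-1}(W)\to W$ is dominated by a \emph{finite} sequence of blow-ups: that is, there exist a blow-up modification $q\colon\bar M_W\to W$ and a holomorphic map $p_W\colon\bar M_W\to\pi^{-1}(W)$ with $q=\pi\circ p_W$. Since the domination will be produced through the universal property of blowing up, the local maps are canonical; their uniqueness forces them to agree on overlaps, and they glue into the global $\pi'\colon M'\to X$ and $h\colon M'\to M$ with $\pi'=\pi\circ h$.

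Let $S\subset X$ be a nowhere dense analytic set over whose complement $\pi$ is biholomorphic. The core step is to dominate $\pi^{-1}(W)$ by a \emph{projective} modification of $W$ via a Chow-covering construction. Because $\pi$ is proper, $\pi^{-1}(W)$ is relatively compact and can be covered by finitely many charts, each realised as an analytic subset of $W\times\C^{N_i}$; compactifying the fibre factors to $W\times\C\mathbb{P}^{N_i}$ and taking the closure of the graph of $\pi^{-1}(W)$ in a suitable product of these fibrewise compactifications produces a projective-over-$W$ modification $q\colon\bar M_W\to W$ together with a projection $p_W\colon\bar M_W\to\pi^{-1}(W)$. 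Both $q$ and $p_W$ are then proper modifications, biholomorphic over $W\setminus S$, and $q=\pi\circ p_W$ by construction; thus $p_W$ is already the desired map down to $M$.

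It remains to recognise the projective modification $q\colon\bar M_W\to W$ as a blow-up. A projective modification that is an isomorphism over the complement of a nowhere dense analytic set is the blow-up of $W$ along a coherent ideal sheaf $\mathcal{I}\subset\OO_W$, extracted from a $q$-relatively ample line bundle; coherence of the relevant direct images is guaranteed by Grauert's direct image theorem, already invoked in \S\ref{sec:cohomology}. If one insists on elementary centres, a further finite principalisation of $\mathcal{I}$ over $W$ exhibits $\bar M_W$ as dominated by a finite sequence of blow-ups, and composing the corresponding projections with $p_W$ preserves the map to $M$. Patching the pieces $W$ along the locally finite cover then yields the theorem.

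I expect the main obstacle to be the second step: the passage from the bare properness of $\pi$ to an honest projective domination in the \emph{analytic} category, where $X$ carries no global projective embedding. The delicate points are the finiteness of the chart covering (which rests on the properness of $\pi$ and the relative compactness of $W$), the analyticity of the graph closure in this non-algebraic setting, and the verification that the resulting $q$ is genuinely a blow-up, i.e. the analytic analogue of the statement that projective birational morphisms are blow-ups. By comparison, the locally finite patching of the first step is a formal consequence of the uniqueness built into the universal property of blowing up.
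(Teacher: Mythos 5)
The paper offers no proof of this statement: it is Hironaka's Chow lemma, quoted directly from \cite{Hi}, so there is no internal argument to compare yours against. Judged on its own terms, your proposal reproduces the classical Chow-lemma strategy (graph closure in a product of fibrewise projective compactifications over relatively compact pieces of $X$, followed by recognition of the resulting projective modification as a blow-up), but it contains genuine gaps, the most serious of which is the gluing step.

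You assert that because the local dominations are ``canonical'' by the universal property of blowing up, they must agree on overlaps and patch into a global, locally finite sequence of blow-ups. But the ideal sheaf $\mathcal{I}\subset\OO_W$ you extract from a $q$-relatively ample line bundle is not canonical: it depends on the choice of chart covering of $\pi^{-1}(W)$, on the embeddings into $W\times\C\mathbb{P}^{N_i}$, and on the choice of ample bundle and twist. Two overlapping pieces $W$ and $W'$ will in general produce ideal sheaves that disagree on $W\cap W'$, hence blow-ups that do not patch; the universal property gives uniqueness of maps \emph{into a fixed blow-up}, not uniqueness of the blow-up data itself. Repairing this is precisely where the substance of \cite{Hi} lies: Hironaka globalizes via the flattening theorem, blowing up canonically chosen permissible centres supported where $\pi$ fails to be flat, and the local finiteness of the resulting sequence is a nontrivial output of that induction. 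Beyond this, even the local step defers its two hardest analytic points --- the analyticity of the graph closure in the non-algebraic setting, and the fact that a projective modification in the analytic category is the blow-up of a coherent ideal --- and the ``finite principalisation'' you invoke at the end is itself a theorem of comparable depth. As a roadmap the outline is reasonable, but as a proof it leaves the essential content of Hironaka's theorem unestablished.
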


Note that $h: M'\rightarrow M$ is a proper modification, too.\\


Now, let $x\in X$. Then $x$ has a Stein neighborhood $V$ with a $C^\infty$ strictly plurisubharmonic exhaustion function $\varphi$
such that $V':=\pi'^{-1}(V)$ is of K\"ahler type and has a $C^\infty$ plurisubharmonic exhaustion function $\varphi\circ \pi'$.
Note that the $H^q(V',\Omega^n_{M'})$ are Hausdorff for all $q\geq 0$ because $V'$ is holomorphically convex
(see \cite{La}, Theorem 2.1, and \cite{Pr}, Lemma II.1, and use the Remmert reduction). So, $H^q(V',\Omega^n_{M'})=0$ for $q\geq 1$
by Theorem \ref{thm:kaehler}. A similar reasoning shows that $R^q h_* \Omega^n_{M'}=0$ (cf. \cite{Ta}, paragraph 3)
and this yields
\begin{eqnarray}\label{eq:last}
H^q(\pi^{-1}(V),\Omega^n_M) \cong H^q(V',h_* \Omega^n_M) = H^q(V',\Omega^n_{M'}) = 0
\end{eqnarray}
because $h_* \Omega^n_{M'}=\Omega^n_M$.
\eqref{eq:last} gives Takegoshi's Theorem \ref{thm:takegoshi}.

\vspace{6mm}
{\bf Acknowledgments}

\vspace{2mm}
This work was done while the author was visiting the Texas A\&M University in College Station,
supported by a fellowship within the Postdoc-Programme of the German Academic Exchange Service (DAAD).
The author would like to thank the SCV group at the Texas A\&M University for its hospitality.


\begin{thebibliography}{99999}




\bibitem[AHL]{AHL} {\sc J.\ M.\ Aroca, H.\ Hironaka, J.\ L.\ Vicente},
Desingularization theorems, {\em Mem. Math. Inst. Jorge Juan}, {\bf No. 30}, Madrid, 1977.

\bibitem[BiMi]{BiMi} {\sc E.\ Bierstone, P.\ Milman}, Canonical desingularization in characteristic zero by blowing-up
the maximum strata of a local invariant, {\em Inventiones Math.} {\bf 128} (1997), {\em no. 2}, 207--302.

\bibitem[Eh]{Eh}{\sc L.\ Ehrenpreis}, A new proof and an extension of Hartogs' theorem,
{\em Bull. Amer. Math. Soc.} {\bf 67} (1961), 507--509.

\bibitem[Gr]{Gr2}{\sc H.\ Grauert}, Ein Theorem der analytischen Garbentheorie und die Modulr\"aume
komplexer Strukturen, {\em Publ. Math. Inst. Hautes Etud. Sci.} {\bf 5} (1960), 5--64.

\bibitem[GrRe]{GrRe} {\sc H.\ Grauert, R.\ Remmert}, {\em Coherent Analytic Sheaves},
Springer-Verlag, Berlin, 1984.

\bibitem[GrRie]{GrRie} {\sc H.\ Grauert, O.\ Riemenschneider},
Verschwindungss\"atze f\"ur analytische Kohomologiegruppen auf komplexen R\"aumen,
{\em Invent. Math.} {\bf 11} (1970), 263--292.

\bibitem[Ha]{Ha} {\sc H.\ Hauser}, The Hironaka theorem on resolution of singularities,
{\em Bull. (New Series) Amer. Math. Soc.} {\bf 40}, no. 3, (2003), 323--403.

\bibitem[Hi]{Hi}{\sc H.\ Hironka}, Flattening theorem in complex analytic geometry,
{\em Am. J. Math.} {\bf 97} (1975), 503--547.

\bibitem[La]{La}{\sc H.\ Laufer}, On Serre duality and envelopes of holomorphy,
{\em Trans. Amer. Math. Soc.} {\bf 128} (1967), 414--436.

\bibitem[MePo1]{MePo1}{\sc J.\ Merker, E.\ Porten}, A Morse-Theoretical Proof of the Hartogs' Extension Theorem,
{\em J. Geom. Anal.} {\bf 17} (2007), no. 3, 513--546.

\bibitem[MePo2]{MePo2}{\sc J.\ Merker, E.\ Porten}, The Hartogs' extension theorem on $(n-1)$-complete complex spaces,
preprint 2007, {\sf arXiv:0704.3216}, to appear in {\em J. reine u. angw. Math.}

\bibitem[Pr]{Pr} {\sc D.\ Prill}, The divisor class groups of some rings of holomorphic functions,
{\em Math. Z.} {\bf 121} (1971), 58--80.

\bibitem[Ru]{Rp}{\sc J.\ Ruppenthal}, A $\dq$-theoretical proof of Hartogs' extension theorem on Stein spaces
with isolated singularities, {\em J. Geom. Anal.} {\bf 18} (2008), no. 4.

\bibitem[Se]{Se}{\sc J.-P.\ Serre}, Un Th{\'e}or{\`e}me de Dualit{\'e},
{\em Comment. Math. Helv.} {\bf 29} (1955), 9--26.

\bibitem[Ta]{Ta} {\sc K.\ Takegoshi}, Relative vanishing theorems in analytic spaces,
{\em Duke Math. J.} {\bf 51} (1985), no. 1, 273--279.

\end{thebibliography}
\end{document}